\title{On transference principle and Nesterenko's linear independence criterion}
\author{Oleg\,N.\,German, Nikolay\,G.\,Moshchevitin}
\date{}
\theoremstyle{definition}
\newtheorem{definition}{Definition}
\newtheorem*{notation*}{Notation}
\theoremstyle{remark}
\newtheorem{remark}{Remark}
\newtheorem*{remark*}{Remark}
\theoremstyle{plain}
\newtheorem{theorem}{Theorem}
\newtheorem{lemma}{Lemma}
\newtheorem{proposition}{Proposition}
\newtheorem{corollary}{Corollary}
\newtheorem*{question*}{Question}
\newtheorem*{statement*}{Statement}
\newtheorem*{corollary*}{Corollary}
\renewcommand{\phi}{\varphi}
\renewcommand{\vec}[1]{\mathbf{#1}}
\renewcommand{\geq}{\geqslant}
\renewcommand{\leq}{\leqslant}
\newcommand{\e}{\varepsilon}
\newcommand{\R}{\mathbb{R}}
\newcommand{\Z}{\mathbb{Z}}
\newcommand{\Q}{\mathbb{Q}}
\newcommand{\N}{\mathbb{N}}
\newcommand{\cC}{\mathcal{C}}
\newcommand{\cL}{\mathcal{L}}
\newcommand{\Glin}{G_{\textup{lin}}}
\newcommand{\Gsim}{G_{\textup{sim}}}
\begin{document}

\maketitle

\hfill
\textit{Dedicated to Yu.\,V.\,Nesterenko}


\hfill
\textit{on the occasion of his 75-th anniversary}

\vskip 8mm

\begin{abstract}
  \noindent
  We consider the problem of simultaneous approximation of real numbers $\theta_1, \ldots,\theta_n$ with rationals and the dual problem of approximating zero with the values of the linear form $x_0+\theta_1x_1+\ldots+\theta_nx_n$ at integer points. In this setting we analyse two transference inequalities obtained by Schmidt and Summerer. We present a rather simple geometric observation, which proves their result. We also derive several corollaries previously unknown. Particularly, we show that, together with the transference inequalities for uniform exponents, Schmidt and Summerer's inequalities imply the inequalities by Bugeaud and Laurent and ``one half'' of the inequalities by Marnat and Moshchevitin. Besides that, we show that our main construction provides a rather simple proof of Nesterenko's linear independence criterion.
\end{abstract}


\section{Introduction}\label{sec:intro}

This paper originated as a result of studying the prominent paper \cite{nesterenko_criterion} by Nesterenko, where he proves his linear independence criterion.

Given an integer $n\geq2$, let us fix an arbitrary $n$-tuple $\pmb\theta=(\theta_1,\ldots,\theta_n)\in\R^n$. The problem of approximating $\theta_1,\ldots,\theta_n$ simultaneously with rational numbers having equal denominators is well known to be related to the problem of approximating zero with the values of the linear form $x_0+\theta_1x_1+\ldots+\theta_nx_n$ at nonzero integer points. This relation is performed by the so called \emph{transference principle} discovered by Khintchine \cite{khintchine_palermo}. He formulated it in terms of \emph{Diophantine exponents}.

\begin{definition}\label{def:exp_sim}
  The \emph{(regular) Diophantine exponent} $\lambda=\lambda(\pmb\theta)$ is defined as the supremum of real numbers $\gamma$ such that the system of inequalities
  \begin{equation}\label{eq:exp_sim}
    \max_{1\leq i\leq n}|x_0\theta_i-x_i|\leq t^{-\gamma},\qquad
    0<|x_0|\leq t
  \end{equation}
  admits solutions in $\vec x=(x_0,\ldots,x_n)\in\Z^{n+1}$ for some arbitrarily large values of $t$.

  The respective \emph{uniform Diophantine exponent} $\hat\lambda=\hat\lambda(\pmb\theta)$ is defined as the supremum of real numbers $\gamma$ such that \eqref{eq:exp_sim} admits solutions in $\vec x=(x_0,\ldots,x_n)\in\Z^{n+1}$ for every $t$ large enough.
\end{definition}

\begin{definition}\label{def:exp_lin}
  The \emph{(regular) Diophantine exponent} $\omega=\omega(\pmb\theta)$ is defined as the supremum of real numbers $\gamma$ such that the system of inequalities
  \begin{equation}\label{eq:exp_lin}
    |x_0+\theta_1x_1+\ldots+\theta_nx_n|\leq t^{-\gamma},\qquad
    0<\max_{1\leq i\leq n}|x_i|\leq t
  \end{equation}
  admits solutions in $\vec x=(x_0,\ldots,x_n)\in\Z^{n+1}$ for some arbitrarily large values of $t$.

  The respective \emph{uniform Diophantine exponent} $\hat\omega=\hat\omega(\pmb\theta)$ is defined as the supremum of real numbers $\gamma$ such that \eqref{eq:exp_lin} admits solutions in $\vec x=(x_0,\ldots,x_n)\in\Z^{n+1}$ for every $t$ large enough.
\end{definition}


It follows immediately from Dirichlet's approximation theorem (or from Minkowski's convex body theorem) that the Diophantine exponents satisfy the trivial relations
$\lambda\geq\hat\lambda\geq1/n$ and $\omega\geq\hat\omega\geq n$.
It is also known (see \cite{jarnik_tiflis}) that, unless all the $\theta_i$ are rational, we have a slightly less trivial inequality
$\hat\lambda\leq1$.

Let us give a brief account on the existing nontrivial relations.

The aforementioned Khintchine's transference principle was published in 1926. It can be formulated as follows:
\begin{equation}\label{eq:khintchine}
  \frac{1+\omega}{1+\lambda}\geq n,\qquad
  \frac{1+\omega^{-1}}{1+\lambda^{-1}}\geq \frac1n\,.
\end{equation}
Later on, in 2007--10, Bugeaud and Laurent \cite{bugeaud_laurent_2007, bugeaud_laurent_2010} improved upon \eqref{eq:khintchine} by showing that
\begin{equation}\label{eq:bugeaud_laurent}
  \frac{1+\omega}{1+\lambda}\geq\frac{n-1}{1-\hat\lambda}\,,\qquad
  \frac{1+\omega^{-1}}{1+\lambda^{-1}}\geq \frac{1-\hat\omega^{-1}}{n-1}\,,
\end{equation}
provided that $1,\theta_1,\ldots,\theta_n$ are linearly independent over $\Q$. It can be easily verified that \eqref{eq:bugeaud_laurent} implies \eqref{eq:khintchine}, as $\hat\omega^{-1}\leq1/n\leq\hat\lambda\leq1$.

As for the uniform exponents, Jarn\'{\i}k \cite{jarnik_tiflis} proved in 1938 that, if $n=2$ and $1,\theta_1,\theta_2$ are linearly independent over $\Q$, a remarkable identity holds:
\begin{equation}\label{eq:jarnik_identity}
  \hat\omega^{-1}+\hat\lambda=1.
\end{equation}
In 2012 it was shown by German \cite{german_AA_2012, german_MJCNT_2012} that for arbitrary $n\geq2$ we have
\begin{equation}\label{eq:german_uniform_transference}
  \hat\omega\geq\frac{n-1}{1-\hat\lambda}\,,\qquad
  \hat\lambda\geq \frac{1-\hat\omega^{-1}}{n-1}\,.
\end{equation}
Clearly, \eqref{eq:german_uniform_transference} turns into Jarn\'{\i}k's identity for $n=2$.

In 2013 Schmidt and Summerer \cite{schmidt_summerer_2013} showed that
\begin{equation}\label{eq:german_moshchevitin}
  \hat\omega\leq\frac{1+\omega}{1+\lambda}\,,\qquad
  \hat\lambda\leq\frac{1+\omega^{-1}}{1+\lambda^{-1}}\,,
\end{equation}
provided that $1,\theta_1,\ldots,\theta_n$ are linearly independent over $\Q$. An alternative proof of their result can be found in \cite{german_moshchevitin_2013}. Clearly, \eqref{eq:german_uniform_transference} and \eqref{eq:german_moshchevitin} imply Bugeaud and Laurent's inequalities \eqref{eq:bugeaud_laurent}.

Furthermore, in 1950--54 Jarn\'{\i}k \cite{jarnik_szeged_1950, jarnik_czech_1954} proved for $n=2$ the inequalities
\begin{equation}\label{eq:jarnik_inequalities}
  \frac{\omega}{\hat\omega}\geq\hat\omega-1,\qquad
  \frac{\lambda}{\hat\lambda}\geq\frac{\hat\lambda}{1-\hat\lambda}\,.
\end{equation}
It is interesting to note that by \eqref{eq:jarnik_identity} the right-hand sides in \eqref{eq:jarnik_inequalities} coincide (for $n=2$) and are both equal to $\hat\omega\hat\lambda$, as well as to $(1-\hat\omega^{-1})/(1-\hat\lambda)$. We shall mention this fact in Section \ref{sec:corollaries_marnat_moshchevitin}.

Inequalities \eqref{eq:jarnik_inequalities} were generalised recently to the case of arbitrary $n\geq2$ by Marnat and Moshchevitin \cite{marnat_moshchevitin}. They showed that
\begin{equation}\label{eq:marnat_moshchevitin}
  \frac{\omega}{\hat\omega}\geq\Glin(\hat\omega),\qquad
  \frac{\lambda}{\hat\lambda}\geq\Gsim(\hat\lambda),
\end{equation}
where $\Glin(\hat\omega)$ and $\Gsim(\hat\lambda)$ are the largest roots of the polynomials
\begin{equation}\label{eq:marnat_moshchevitin_f_g}
  f(x)=\hat\omega^{-1}x^n-x+(1-\hat\omega^{-1}),
  \qquad
  g(x)=(1-\hat\lambda)x^n-x^{n-1}+\hat\lambda
\end{equation}
respectively. An alternative proof of the second inequality \eqref{eq:marnat_moshchevitin} can be found in \cite{nguyen_poels_roy}.

Summing up, we can say that all the nontrivial relations known up to now follow from \eqref{eq:german_uniform_transference}, \eqref{eq:german_moshchevitin}, and \eqref{eq:marnat_moshchevitin}. The main purpose of the current paper is to present a geometric observation, a rather simple one, which proves \eqref{eq:german_moshchevitin} almost immediately. It appears that this observation also provides a quite simple proof of Nesterenko's linear independence criterion.

\paragraph{}

The rest of the paper is organised as follows. In Section \ref{sec:empty_cylinder} we formulate and prove our main result and apply it to prove \eqref{eq:german_moshchevitin}. In Section \ref{sec:corollaries} we derive some corollaries to \eqref{eq:german_moshchevitin} concerning lower bounds for the ratios $\omega/\hat\omega$, $\lambda/\hat\lambda$ and show that \eqref{eq:german_moshchevitin} implies the weakest of inequalities \eqref{eq:marnat_moshchevitin}. Besides that, we analyse how \eqref{eq:german_uniform_transference} and \eqref{eq:german_moshchevitin} split Bugeaud and Laurent's inequalities \eqref{eq:bugeaud_laurent}, and compare some of our corollaries with a recent result by Schleischitz. Finally, Section \ref{sec:nesterenko} is devoted to Nesterenko's linear independence criterion. We present a rather simple proof of his theorem, which is based on our main geometric observation described in Section \ref{sec:empty_cylinder}. We also show that, in order to prove the linear independence criterion itself, it suffices to use the first step of Nesterenko's induction.

\section{Empty cylinder lemma}\label{sec:empty_cylinder}

Let us introduce some notation. Let us denote by $\ell$ the one-dimensional subspace generated by $(1,\theta_1,\ldots,\theta_n)$ and by $\ell^\perp$ its orthogonal complement. For every $\vec x\in\R^{n+1}$ let us denote by $r(\vec x)$ the Euclidean distance from $\vec x$ to $\ell$ and by $h(\vec x)$ the Euclidean distance from $\vec x$ to $\ell^\perp$. Let us also denote by $\langle\,\cdot\,,\cdot\,\rangle$ the inner product in $\R^{n+1}$.

Our main geometric observation is described by the following statement.

\begin{lemma}\label{l:empty_cylinder}
  Let $t,\alpha,\beta$ be positive real numbers such that $t^\beta>2t^\alpha$. Suppose $\vec v\in\Z^{n+1}$ satisfies
  \begin{equation}\label{eq:empty_cylinder_v}
    r(\vec v)=t^{\alpha-1-\beta},\qquad
    h(\vec v)=t^\alpha.
  \end{equation}
  Consider the half open cylinder
  \begin{equation}\label{eq:empty_cylinder_C}
    \cC=\cC(t,\alpha,\beta)=\Big\{\, \vec x\in\R^{n+1}\ \Big|\ r(\vec x)<t,\ t^{-\beta}\leq h(\vec x)\leq t^{-\alpha}-t^{-\beta} \,\Big\}.
  \end{equation}
  Then $\cC\cap\Z^{n+1}=\varnothing$.
\end{lemma}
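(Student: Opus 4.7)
The plan is to argue by contradiction. Assume some $\vec w\in\cC\cap\Z^{n+1}$ exists, and extract a contradiction from the integrality of the inner product $\langle\vec v,\vec w\rangle$. Decompose both vectors along $\ell$ and $\ell^\perp$: write $\vec v=\vec v_\ell+\vec v_\perp$ and $\vec w=\vec w_\ell+\vec w_\perp$, so that $\|\vec v_\ell\|=h(\vec v)$, $\|\vec v_\perp\|=r(\vec v)$, and similarly for $\vec w$. Orthogonality of $\ell$ and $\ell^\perp$ then gives $\langle\vec v,\vec w\rangle=\langle\vec v_\ell,\vec w_\ell\rangle+\langle\vec v_\perp,\vec w_\perp\rangle$.

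The first step is to show $|\langle\vec v,\vec w\rangle|<1$. By Cauchy--Schwarz,
\[
|\langle\vec v,\vec w\rangle|\leq h(\vec v)h(\vec w)+r(\vec v)r(\vec w).
\]
Substituting the hypotheses on $\vec v$ and the bounds from the definition of $\cC$, the first summand is at most $t^\alpha(t^{-\alpha}-t^{-\beta})=1-t^{\alpha-\beta}$ and the second is strictly less than $t^{\alpha-1-\beta}\cdot t=t^{\alpha-\beta}$. Their sum is strictly less than $1$, and since $\langle\vec v,\vec w\rangle\in\Z$, integrality forces $\langle\vec v,\vec w\rangle=0$.

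The contradiction is then immediate. The relation $\langle\vec v_\ell,\vec w_\ell\rangle=-\langle\vec v_\perp,\vec w_\perp\rangle$ must hold; since $\vec v_\ell$ and $\vec w_\ell$ both lie on the one-dimensional line $\ell$, the left-hand side in absolute value equals $h(\vec v)h(\vec w)\geq t^\alpha\cdot t^{-\beta}=t^{\alpha-\beta}$, using the lower bound for $h(\vec w)$ in $\cC$. On the other hand, Cauchy--Schwarz together with the strict inequality $r(\vec w)<t$ gives $|\langle\vec v_\perp,\vec w_\perp\rangle|<t^{\alpha-\beta}$, contradicting the previous one. The only mildly delicate point is the calibration: the thresholds defining $\cC$ are chosen precisely so that the two Cauchy--Schwarz contributions fill up the room below $1$, while the same quantity $t^{\alpha-\beta}$ reappears as a non-strict lower bound and a strict upper bound on $|\langle\vec v_\ell,\vec w_\ell\rangle|$ once integrality is used; the condition $t^\beta>2t^\alpha$ merely guarantees that the cylinder $\cC$ is nonempty.
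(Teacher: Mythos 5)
Your proof is correct and rests on the same key estimates as the paper's: bounding $\langle\vec v,\vec x\rangle$ by decomposing along $\ell$ and $\ell^\perp$ and using the calibration $h(\vec v)h(\vec x)\leq 1-t^{\alpha-\beta}$, $r(\vec v)r(\vec x)<t^{\alpha-\beta}$. Your reorganization (show $|\langle\vec v,\vec w\rangle|<1$, use integrality to force $\langle\vec v,\vec w\rangle=0$, then rule out $0$) is a cosmetic variant of the paper's direct claim $0<\langle\vec v,\vec x\rangle<1$, and is in fact a bit cleaner regarding signs, since $h(\,\cdot\,)$ is defined as a distance and the paper's strict positivity tacitly assumes the $\ell$-components of $\vec v$ and $\vec x$ point the same way.
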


\begin{proof}
  Consider an arbitrary $\vec x\in\cC$. Let us show that $0<\langle\vec v,\vec x\rangle<1$. To this end let us denote by $\vec y$ the orthogonal projection of $\vec x$ onto the two-dimensional subspace $\pi$ spanned by $\ell$ and $\vec v$. Then $r(\vec y)$ and $h(\vec y)$ can be interpreted as the absolute values of the coordinates of $\vec y$ in $\pi$ with respect to the coordinate axes $\ell$ and $\ell^\bot\cap\pi$ (see Figure \ref{fig:empty_cylinder}). Hence
  \[
    \langle\vec v,\vec x\rangle=
    \langle\vec v,\vec y\rangle\geq
    h(\vec v)h(\vec y)-r(\vec v)r(\vec y)>
    t^{\alpha-\beta}-t^{\alpha-\beta}=0
  \]
  and
  \[
    \langle\vec v,\vec x\rangle=
    \langle\vec v,\vec y\rangle\leq
    h(\vec v)h(\vec y)+r(\vec v)r(\vec y)<
    1-t^{\alpha-\beta}+t^{\alpha-\beta}=1.
  \]
  Thus, indeed, $0<\langle\vec v,\vec x\rangle<1$, and $\vec x$ cannot be an integer point.
\end{proof}

\begin{figure}[h]
\centering
\begin{tikzpicture}[scale=0.5]
    \draw[->,>=stealth'] (-12,0) -- (13.3,0) node[right] {$\ell^\perp\cap\pi$};
    \draw[->,>=stealth'] (0,-3) -- (0,14) node[above] {$\ell$};

    \draw (0,0) -- (3,12);
    \node[fill=black,circle,inner sep=1.2pt] at (3,12) {};
    \draw (3,12) node[right]{$\vec v$};

    \draw[color=black] plot[domain=-12:12] (\x, {-\x/4}) node[above right]{$\langle\vec v,\vec x\rangle=0$};
    \draw[color=black] plot[domain=-12:12] (\x, {-\x/4+7}) node[above right]{$\langle\vec v,\vec x\rangle=1$};

    \fill[blue,opacity=0.1] (8,2) -- (8,5) -- (-8,5) -- (-8,2) -- cycle;
    \draw (-8,2) -- (8,2);
    \draw (-8,5) -- (8,5);

    \draw[dashed] (-0.1,12) -- (3,12);
    \draw[dashed] (3,-0.1) -- (3,12);
    \draw[dashed] (8,-0.1) -- (8,2);
    \draw[dashed] (-8,-0.1) -- (-8,2);

    \draw (8,0) node[above right]{$t$};
    \draw (-8,0) node[above left]{$-t$};
    \draw (3,0) node[above right]{$t^{\alpha-1-\beta}$};
    \draw (0,12) node[above left]{$t^{\alpha}$};
    \draw (0,5) node[above left]{$t^{-\alpha}-t^{-\beta}$};
    \draw (0,2) node[above left]{$t^{-\beta}$};

    \draw (-7.1,3.5) node[right]{$\cC$};
\end{tikzpicture}
\caption{Empty cylinder} \label{fig:empty_cylinder}
\end{figure}
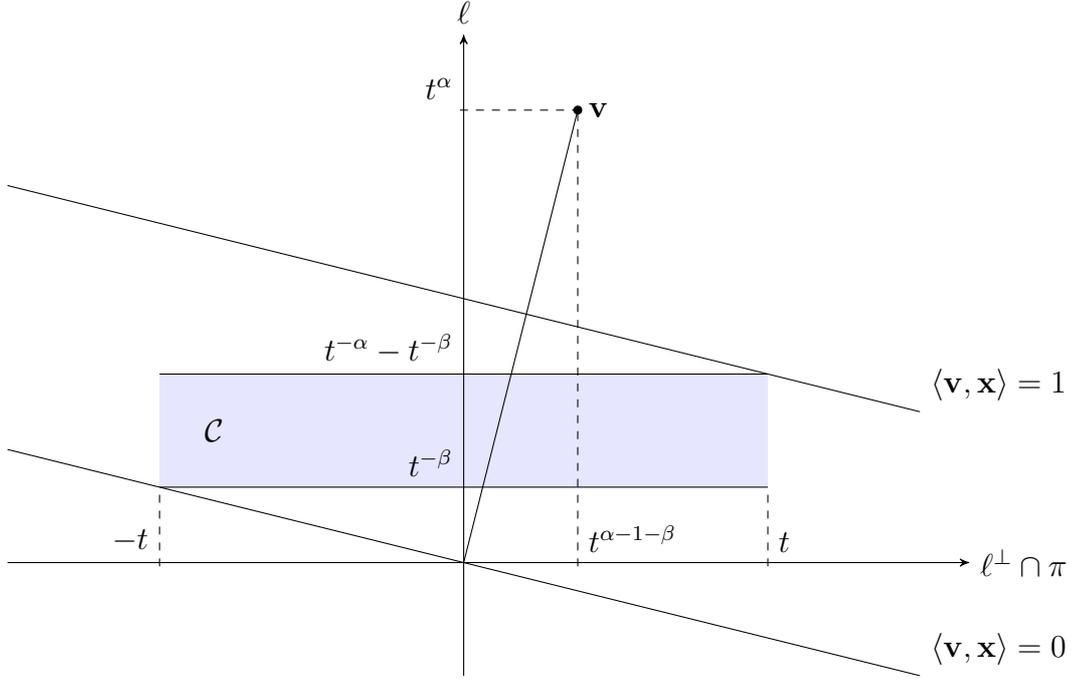

Now we can prove Schmidt and Summerer's inequalities \eqref{eq:german_moshchevitin}.

\begin{proof}[Proof of \eqref{eq:german_moshchevitin}]
  We may assume that $\lambda>1/n$ and $\omega>n$. Indeed, by Khintchine's inequalities \eqref{eq:khintchine} the equalities $\lambda=1/n$ and $\omega=n$ are equivalent, and if they hold, due to the trivial inequalities we also have $\hat\lambda=1/n$, $\hat\omega=n$. With such values of the exponents both inequalities \eqref{eq:german_moshchevitin} are obviously true. So, let us suppose that $\lambda>1/n$ and $\omega>n$.

  By the definition of $\lambda$ and $\omega$ we can choose a point $\vec v\in\Z^{n+1}$ and a positive real number $\gamma$ to satisfy any of the following two collections of conditions:

  \begin{enumerate}
    \item $h(\vec v)$ is arbitrarily large, $r(\vec v)=h(\vec v)^{-\gamma}$, $\gamma>1/n$, $\gamma$ is arbitrarily close to $\lambda$;
    \item $h(\vec v)$ is arbitrarily small, $r(\vec v)=h(\vec v)^{-\gamma}$, $\gamma<1/n$, $\gamma$ is arbitrarily close to $1/\omega$.
  \end{enumerate}

  We shall refer to the first choice as \emph{Case 1}, and to the second one as \emph{Case 2}.

  In either Case the main part of the argument is basically the same. So, let us choose $\vec v$ and $\gamma$ according to one of the two Cases. Set $t=t(\vec v,\gamma)$ to be the smallest positive real number such that the cylinder
  \begin{equation}\label{eq:C_v_definition}
    \cC_{\vec v}=\Big\{\, \vec x\in\R^{n+1}\ \Big|\ r(\vec x)\leq t,\ h(\vec x)\leq t\cdot h(\vec v)^{-1-\gamma} \,\Big\}
  \end{equation}
  contains a nonzero integer point. Define also $\alpha=\alpha(\vec v,\gamma)$ and $\beta=\beta(\vec v,\gamma)$ by the relations
  \begin{equation}\label{eq:alpha_beta_definition}
    h(\vec v)=t^\alpha,\qquad
    \alpha=\frac{1+\beta}{1+\gamma}\,.
  \end{equation}
  Then $r(\vec v)=h(\vec v)^{-\gamma}=t^{-\alpha\gamma}=t^{\alpha-1-\beta}$ and $t\cdot h(\vec v)^{-1-\gamma}=t^{-\beta}$. Hence $\vec v$ satisfies \eqref{eq:empty_cylinder_v} and
  \[
   \cC_{\vec v}=\Big\{\, \vec x\in\R^{n+1}\ \Big|\ r(\vec x)\leq t,\ h(\vec x)\leq t^{-\beta} \,\Big\}.
  \]
  In order to fulfil the hypothesis of Lemma \ref{l:empty_cylinder} it remains to provide that $t^\beta>2t^\alpha$. We remind that in Case 1 $h(\vec v)$ is arbitrarily large and $\gamma>1/n$, and in Case 2 $h(\vec v)$ is arbitrarily small and $\gamma<1/n$. In both Cases by Minkowski's convex body theorem the volume of $\cC_{\vec v}$ is bounded (by $2^{n+1}$), so the product $t^{n+1}h(\vec v)^{-1-\gamma}=h(\vec v)^{(n+1)/\alpha-(1+\gamma)}$ is also bounded. Hence in Case 1 we have $\alpha\gamma\geq(n+1)\gamma/(1+\gamma)>1$, and therefore, $\beta=\alpha+\alpha\gamma-1>\alpha$, whereas in Case 2 we have $\alpha\gamma\leq(n+1)\gamma/(1+\gamma)<1$, and therefore, $\beta=\alpha+\alpha\gamma-1<\alpha$. Thus, in both Cases we have
  \[
    t^\beta=
    h(\vec v)^{\beta/\alpha}>
    2h(\vec v)=
    2t^\alpha.
  \]
  Having the hypothesis of Lemma \ref{l:empty_cylinder} fulfilled, we conclude that there are no integer points in the cylinder $\cC=\cC(t,\alpha,\beta)$ defined by \eqref{eq:empty_cylinder_C}. At the same time there are some nonzero integer points in $\cC_{\vec v}$, but all of them lie on the boundary of $\cC_{\vec v}$. Moreover, every such point satisfies the condition $r(\vec x)=t$, since
  \[
    \Big\{\, \vec x\in\cC_{\vec v}\ \Big|\ r(\vec x)<t,\ h(\vec x)=t^{-\beta} \,\Big\}=
    \cC_{\vec v}\cap(\cC\cup(-\cC))
  \]
  and $\cC\cup(-\cC)$ is empty. Thus, there is an integer point satisfying
  \[
    r(\vec x)=t,\qquad
    h(\vec x)\leq t^{-\beta},
  \]
  and there are no nonzero integer points satisfying
  \[
    r(\vec x)<t,\qquad
    h(\vec x)\leq t^{-\alpha}-t^{-\beta}.
  \]

  Suppose now that Case 1 holds. Then $\beta>\alpha$ and $t$ can be arbitrarily large. Hence $\omega\geq\beta$, $\hat\omega\leq\alpha$, and
  \[
    \hat\omega\leq\alpha=
    \frac{1+\beta}{1+\gamma}\leq
    \frac{1+\omega}{1+\lambda-\e}
  \]
  with $\e$ positive and arbitrarily small, which gives us the first inequality \eqref{eq:german_moshchevitin}.

  Suppose that Case 2 holds. Then $\beta<\alpha$ and $t$ can be arbitrarily small. Hence $\lambda\geq1/\beta$, $\hat\lambda\leq1/\alpha$, and
  \[
    \hat\lambda\leq\alpha^{-1}=
    \frac{1+\gamma}{1+\beta}\leq
    \frac{1+\omega^{-1}+\e}{1+\lambda^{-1}}
  \]
  with $\e$ positive and arbitrarily small, which gives us the second inequality \eqref{eq:german_moshchevitin}.
\end{proof}

\section{Corollaries to Schmidt and Summerer's inequalities}\label{sec:corollaries}

\subsection{Lower bounds for the ratios $\omega/\hat\omega$ and $\lambda/\hat\lambda$}\label{sec:corollaries_marnat_moshchevitin}

Inequalities \eqref{eq:german_moshchevitin} can be rewritten as
\begin{equation}\label{eq:german_moshchevitin_rewritten}
  \frac{\omega}{\hat\omega}\geq\frac{1+\lambda}{1+\omega^{-1}}\,,\qquad
  \frac{\lambda}{\hat\lambda}\geq\frac{1+\lambda}{1+\omega^{-1}}\,,
\end{equation}
giving thus new lower bounds in the problem of estimating the ratio between the regular and the uniform exponents.

Note that, same as in Jarn\'{\i}k's inequalities \eqref{eq:jarnik_inequalities} for $n=2$, the right-hand sides in \eqref{eq:german_moshchevitin_rewritten} coincide. However, the lower estimate for $\omega/\hat\omega$ and $\lambda/\hat\lambda$ provided by \eqref{eq:german_moshchevitin_rewritten} is a function of $\omega$ and $\lambda$, whereas it has been more traditional to think in this context of functions of $\hat\omega$ and $\hat\lambda$. It appears that the right-hand side $(1+\lambda)/(1+\omega^{-1})$ can be weakened to $(1-\hat\omega^{-1})/(1-\hat\lambda)$, generalising thus Jarn\'{\i}k's inequalities \eqref{eq:jarnik_inequalities} to arbitrary dimension.

\begin{proposition}\label{prop:B_is_not_less_than_A}
  Suppose $1,\theta_1,\ldots,\theta_n$ are linearly independent over $\Q$. Then
  \begin{equation}\label{eq:B_is_not_less_than_A}
    \frac{1+\lambda}{1+\omega^{-1}}\geq
    \frac{1-\hat\omega^{-1}}{1-\hat\lambda}\,.
  \end{equation}
\end{proposition}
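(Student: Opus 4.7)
}

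The plan is to deduce \eqref{eq:B_is_not_less_than_A} as an essentially algebraic consequence of Schmidt and Summerer's inequalities \eqref{eq:german_moshchevitin}, which were just proved in Section \ref{sec:empty_cylinder} under the linear independence hypothesis. First I would rewrite the first inequality of \eqref{eq:german_moshchevitin} in the equivalent form $\hat\omega^{-1}\geq(1+\lambda)/(1+\omega)$, so that
$$
  1-\hat\omega^{-1}\leq\frac{\omega-\lambda}{1+\omega}\,.
$$
A parallel manipulation of the second inequality of \eqref{eq:german_moshchevitin} gives $\hat\lambda\leq\lambda(1+\omega)/\bigl(\omega(1+\lambda)\bigr)$, whence
$$
  1-\hat\lambda\geq\frac{\omega-\lambda}{\omega(1+\lambda)}\,.
$$

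The crucial point is that the very same factor $\omega-\lambda$ appears in both bounds. Khintchine's inequality \eqref{eq:khintchine} gives $1+\omega\geq n(1+\lambda)\geq 2(1+\lambda)$, so $\omega-\lambda\geq 1+\lambda>0$. Consequently, forming the ratio of the two displayed estimates cancels $\omega-\lambda$ and yields
$$
  \frac{1-\hat\omega^{-1}}{1-\hat\lambda}\leq\frac{\omega(1+\lambda)}{1+\omega}=\frac{1+\lambda}{1+\omega^{-1}}\,,
$$
which is precisely \eqref{eq:B_is_not_less_than_A}.

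I do not foresee any substantive obstacle; the argument is purely algebraic once \eqref{eq:german_moshchevitin} is in hand. The only points requiring a brief check concern degeneracies: one must know that $1-\hat\lambda>0$, which is automatic under the linear independence assumption (otherwise \eqref{eq:german_uniform_transference} forces $\hat\omega=\infty$, and the inequality is easily dealt with separately), and one must interpret the bounds correctly when $\omega$ or $\lambda$ is infinite, which is handled by the standard conventions $1/\infty=0$ and $1+\infty=\infty$.
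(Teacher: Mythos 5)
Your proof is correct, and it reaches the conclusion by a genuinely different (though still purely algebraic) manipulation of the Schmidt--Summerer inequalities than the paper's. The paper sets $B=(1+\lambda)/(1+\omega^{-1})$, observes that \eqref{eq:german_moshchevitin_rewritten} reads $\omega\geq\hat\omega B$ and $\lambda\geq\hat\lambda B$, and substitutes these lower bounds back into the definition of $B$ (monotone in both $\omega$ and $\lambda$), giving
\[
  B\geq\frac{1+\hat\lambda B}{1+\hat\omega^{-1}B^{-1}}\,;
\]
clearing denominators produces $B(1-\hat\lambda)\geq1-\hat\omega^{-1}$ directly. You instead extract from \eqref{eq:german_moshchevitin} the explicit bounds $1-\hat\omega^{-1}\leq(\omega-\lambda)/(1+\omega)$ and $1-\hat\lambda\geq(\omega-\lambda)/\bigl(\omega(1+\lambda)\bigr)$ and cancel the common factor $\omega-\lambda$, whose positivity you secure from Khintchine. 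Both arguments are a few lines; the paper's self-substitution trick is slightly slicker and does not need to invoke $\omega>\lambda$ separately, while your version makes the cancellation mechanism explicit. Your caveat about the degenerate case $\hat\lambda=1$ is apt and is in fact needed by both proofs, since each ends with a division by $1-\hat\lambda$.
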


\begin{proof}
  Set $B=(1+\lambda)/(1+\omega^{-1})$. Then \eqref{eq:german_moshchevitin_rewritten} reads as $\omega\geq\hat\omega B$ and $\lambda\geq\hat\lambda B$. Applying \eqref{eq:german_moshchevitin_rewritten} to $\omega$ and $\lambda$ in $B$, we get
  \[
    B\geq\frac{1+\hat\lambda B}{1+\hat\omega^{-1} B^{-1}}\,,
  \]
  or, equivalently,
  \[
    B\geq\frac{1-\hat\omega^{-1}}{1-\hat\lambda}\,.
  \]
\end{proof}

\begin{corollary}\label{cor:german_moshchevitin_weakened_to_A}
  Suppose $1,\theta_1,\ldots,\theta_n$ are linearly independent over $\Q$. Then
  \begin{equation}\label{eq:german_moshchevitin_weakened_to_A}
    \frac{\omega}{\hat\omega}\geq
    \frac{1-\hat\omega^{-1}}{1-\hat\lambda}\,,\qquad
    \frac{\lambda}{\hat\lambda}\geq
    \frac{1-\hat\omega^{-1}}{1-\hat\lambda}\,.
  \end{equation}
\end{corollary}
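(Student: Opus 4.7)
The plan is very short: this corollary is essentially a direct consequence of Proposition~\ref{prop:B_is_not_less_than_A} combined with the reformulation \eqref{eq:german_moshchevitin_rewritten} of Schmidt and Summerer's inequalities. So no new geometric input is needed; the proof is just chaining two inequalities that have already been established.

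More precisely, I would first recall \eqref{eq:german_moshchevitin_rewritten}, which says
\[
  \frac{\omega}{\hat\omega}\geq\frac{1+\lambda}{1+\omega^{-1}}\,,
  \qquad
  \frac{\lambda}{\hat\lambda}\geq\frac{1+\lambda}{1+\omega^{-1}}\,.
\]
Then I would apply Proposition~\ref{prop:B_is_not_less_than_A}, under the hypothesis that $1,\theta_1,\ldots,\theta_n$ are linearly independent over $\Q$, to replace the right-hand side by the weaker quantity $(1-\hat\omega^{-1})/(1-\hat\lambda)$. Substituting immediately yields both inequalities \eqref{eq:german_moshchevitin_weakened_to_A}.

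There is no real obstacle here; the only thing to be mindful of is the linear independence hypothesis, which is needed precisely so that Proposition~\ref{prop:B_is_not_less_than_A} applies (it is the hypothesis under which \eqref{eq:german_moshchevitin}, and hence its rewritten form, was established). One could also observe, as a sanity check, that for $n=2$ Jarn\'{\i}k's identity \eqref{eq:jarnik_identity} makes $(1-\hat\omega^{-1})/(1-\hat\lambda)$ equal to $\hat\omega\hat\lambda$, so the corollary reduces to Jarn\'{\i}k's inequalities \eqref{eq:jarnik_inequalities}, as announced at the start of Section~\ref{sec:corollaries_marnat_moshchevitin}.
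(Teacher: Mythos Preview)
Your proposal is correct and matches the paper's implicit argument: the corollary is stated without proof immediately after Proposition~\ref{prop:B_is_not_less_than_A}, and the intended derivation is exactly to chain \eqref{eq:german_moshchevitin_rewritten} with \eqref{eq:B_is_not_less_than_A}, as you do.
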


Inequalities \eqref{eq:german_moshchevitin_weakened_to_A} coincide with Jarn\'{\i}k's inequalities \eqref{eq:jarnik_inequalities} for $n=2$. Another generalisation of \eqref{eq:jarnik_inequalities} to arbitrary dimension is provided by \eqref{eq:marnat_moshchevitin}. An important difference between \eqref{eq:marnat_moshchevitin} and \eqref{eq:german_moshchevitin_weakened_to_A} is that the lower bounds $\Glin(\hat\omega)$ and $\Gsim(\hat\lambda)$ in \eqref{eq:marnat_moshchevitin} are functions of only one exponent, whereas \eqref{eq:german_moshchevitin_weakened_to_A} requires both.


It appears that if $\Glin(\hat\omega)=\Gsim(\hat\lambda)$, then \eqref{eq:marnat_moshchevitin} is equivalent to \eqref{eq:german_moshchevitin_weakened_to_A}, but if $\Glin(\hat\omega)\neq\Gsim(\hat\lambda)$, then \eqref{eq:german_moshchevitin_weakened_to_A} is stronger than the weakest of \eqref{eq:marnat_moshchevitin} and weaker than the strongest.

\begin{proposition}\label{prop:the_alternative}
  Suppose $1,\theta_1,\ldots,\theta_n$ are linearly independent over $\Q$. Then either
  \begin{equation}\label{eq:the_alternative_1}
    \Glin(\hat\omega)\leq
    (\hat\omega\hat\lambda)^{\frac1{n-1}}\leq
    \frac{1-\hat\omega^{-1}}{1-\hat\lambda}\leq
    \Gsim(\hat\lambda),
  \end{equation}
  or
  \begin{equation}\label{eq:the_alternative_2}
    \Gsim(\hat\lambda)\leq
    \frac{1-\hat\omega^{-1}}{1-\hat\lambda}\leq
    (\hat\omega\hat\lambda)^{\frac1{n-1}}\leq
    \Glin(\hat\omega).
  \end{equation}
  Moreover, if any two of the four quantities under comparison coincide, then so do all of them.
\end{proposition}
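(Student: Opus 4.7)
The plan is to reduce the proposition to four sign calculations. I would set $a=\hat\omega^{-1}$, $b=\hat\lambda$ and write $M=(\hat\omega\hat\lambda)^{1/(n-1)}$, $A=(1-\hat\omega^{-1})/(1-\hat\lambda)$, so that $A=(1-a)/(1-b)$ and $M^{n-1}=b/a$. The polynomials $f$ and $g$ are both divisible by $(x-1)$, with factorisations $f(x)=(x-1)\bigl(a\sum_{k=0}^{n-1}x^k-1\bigr)$ and $g(x)=(x-1)\bigl((1-b)x^{n-1}-b\sum_{k=0}^{n-2}x^k\bigr)$. These show $\Glin,\Gsim\geq 1$, and for $x>1$ the sign of $f(x)$ (resp. $g(x)$) is completely determined by whether $x$ exceeds $\Glin$ (resp. $\Gsim$): $f(x)\geq 0\iff x\geq\Glin$, and $g(x)\geq 0\iff x\geq\Gsim$. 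The trivial bounds, Jarn\'{\i}k's $\hat\lambda\leq 1$, and \eqref{eq:german_uniform_transference} (where linear independence enters) imply $M,A\geq 1$, with equality only in the degenerate case $\hat\omega=n$, $\hat\lambda=1/n$ in which all four quantities coincide at $1$; so I may assume $M,A>1$.

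The heart of the proof is the following computation, which uses only the two defining relations $M^{n-1}=b/a$ and $A(1-b)=1-a$:
\begin{align*}
f(M)&=(1-b)(A-M), & g(M)&=\tfrac{b}{a}(1-b)(M-A),\\
f(A)&=aA\bigl(A^{n-1}-M^{n-1}\bigr), & g(A)&=-a\bigl(A^{n-1}-M^{n-1}\bigr).
\end{align*}
Since $A,M>1$ and $1-b,a>0$, each of these four quantities carries a definite sign controlled by $A-M$: $f(M)$ and $f(A)$ share the sign of $A-M$, while $g(M)$ and $g(A)$ take the opposite sign. Combined with the sign characterisation from the first step, $A\geq M$ forces simultaneously $\Glin\leq M$, $\Glin\leq A$, $M\leq\Gsim$, and $A\leq\Gsim$; together with the assumption $M\leq A$ this produces the chain $\Glin\leq M\leq A\leq\Gsim$, i.e.\ \eqref{eq:the_alternative_1}. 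The case $A\leq M$ is symmetric and yields \eqref{eq:the_alternative_2}.

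For the ``moreover'' clause, I would observe that each of the four identities vanishes if and only if $A=M$. Hence any of the equalities $\Glin=M$, $\Gsim=M$, $\Glin=A$, $\Gsim=A$ already forces $A=M$, and then the first step collapses $\Glin$ and $\Gsim$ to the same value as well. The remaining pairing $\Glin=\Gsim$ is handled by the alternative itself: a strict $A\neq M$ would place $\Glin$ and $\Gsim$ on opposite sides of $M$, so $\Glin=\Gsim$ can only occur when $A=M$, collapsing all four. The only non-routine ingredient in the plan is spotting those four compact identities; once they are written down the rest is a short sign chase.
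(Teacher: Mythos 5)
Your proof is correct and follows essentially the same strategy as the paper's: characterise $\Glin$ and $\Gsim$ by the sign of $f$ and $g$ on $[1,\infty)$, then evaluate $f$ and $g$ at $M=(\hat\omega\hat\lambda)^{1/(n-1)}$ and $A=(1-\hat\omega^{-1})/(1-\hat\lambda)$ using the relations $M^{n-1}=b/a$, $A(1-b)=1-a$. The paper records only the two equivalences $g(A)\geq0\iff A\leq M\iff f(M)\leq0$ (under the banner ``by simple substitution and regrouping''), whereas you write out the four closed-form identities explicitly — a slightly more complete version of the same sign chase, not a different route.
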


\begin{proof}
  Let $f(x)$ and $g(x)$ be the polynomials defined by \eqref{eq:marnat_moshchevitin_f_g}. Then, given $t\geq1$, we have
  \[
    t>\Glin(\hat\omega)\iff f(t)>0,\qquad
    t>\Gsim(\hat\lambda)\iff g(t)>0.
  \]
  Since $\hat\omega\geq n$ and $\hat\lambda\geq1/n$, both $(\hat\omega\hat\lambda)^{\frac1{n-1}}\geq1$ and $(1-\hat\omega^{-1})/(1-\hat\lambda)\geq1$. By simple substitution and regrouping it is easily verified that
  \[
    g\bigg(\frac{1-\hat\omega^{-1}}{1-\hat\lambda}\bigg)\geq0\iff
    \frac{1-\hat\omega^{-1}}{1-\hat\lambda}\leq
    (\hat\omega\hat\lambda)^{\frac1{n-1}}\iff
    f\Big((\hat\omega\hat\lambda)^{\frac1{n-1}}\Big)\leq0
  \]
  and that the respective equalities are equivalent. This proves the statement.
\end{proof}

\begin{corollary}\label{cor:german_marnat_moshchevitin}
  Denote $B(\omega,\lambda)=\dfrac{1+\lambda}{1+\omega^{-1}}$\,, $A(\hat\omega,\hat\lambda)=\dfrac{1-\hat\omega^{-1}}{1-\hat\lambda}$\,. Then, assuming that $1,\theta_1,\ldots,\theta_n$ are linearly independent over $\Q$, we have
  \begin{equation}\label{eq:german_marnat_moshchevitin}
    \frac{\omega}{\hat\omega}\geq
    \max\big(B(\omega,\lambda),\Glin(\hat\omega)\big),\qquad
    \frac{\lambda}{\hat\lambda}\geq
    \max\big(B(\omega,\lambda),\Gsim(\hat\lambda)\big).
  \end{equation}
  Moreover, $B(\omega,\lambda)\geq A(\hat\omega,\hat\lambda)\geq\min\big(\Glin(\hat\omega),\Gsim(\hat\lambda)\big)$.
\end{corollary}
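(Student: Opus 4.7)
The plan is to observe that each individual bound in \eqref{eq:german_marnat_moshchevitin} is already available from results proved earlier in the paper or quoted from the literature, so the only real content of the corollary is to assemble them together with Propositions \ref{prop:B_is_not_less_than_A} and \ref{prop:the_alternative}.

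First I would handle the two maxima. For the bound $\omega/\hat\omega\geq B(\omega,\lambda)$ I would simply cite the first inequality in the rewriting \eqref{eq:german_moshchevitin_rewritten} of Schmidt and Summerer's inequalities, and for $\omega/\hat\omega\geq\Glin(\hat\omega)$ the first of the Marnat--Moshchevitin inequalities \eqref{eq:marnat_moshchevitin}. Taking the maximum yields the first inequality of \eqref{eq:german_marnat_moshchevitin}. The second inequality of \eqref{eq:german_marnat_moshchevitin} is obtained identically: the bound $\lambda/\hat\lambda\geq B(\omega,\lambda)$ is the second inequality in \eqref{eq:german_moshchevitin_rewritten}, while $\lambda/\hat\lambda\geq\Gsim(\hat\lambda)$ is the second inequality in \eqref{eq:marnat_moshchevitin}.

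Next I would address the ``Moreover'' claim. The first of the two inequalities $B(\omega,\lambda)\geq A(\hat\omega,\hat\lambda)$ is exactly the content of Proposition \ref{prop:B_is_not_less_than_A}. For the second inequality $A(\hat\omega,\hat\lambda)\geq\min\bigl(\Glin(\hat\omega),\Gsim(\hat\lambda)\bigr)$ I would invoke Proposition \ref{prop:the_alternative}: one of the chains \eqref{eq:the_alternative_1} or \eqref{eq:the_alternative_2} must hold, and in each of them the quantity $A(\hat\omega,\hat\lambda)=(1-\hat\omega^{-1})/(1-\hat\lambda)$ is sandwiched between $\Glin(\hat\omega)$ and $\Gsim(\hat\lambda)$, hence is at least their minimum.

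Since every step reduces to direct quotation of an inequality already established, there is no genuine obstacle; the corollary is simply the packaged combination of \eqref{eq:german_moshchevitin_rewritten}, \eqref{eq:marnat_moshchevitin}, Proposition \ref{prop:B_is_not_less_than_A} and Proposition \ref{prop:the_alternative}. The only mildly non-routine step is recognising that Proposition \ref{prop:the_alternative} yields exactly the $\min$ bound rather than something stronger, which it does because the alternative does not specify \emph{which} of $\Glin(\hat\omega)$ and $\Gsim(\hat\lambda)$ is the smaller one.
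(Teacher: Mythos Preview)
Your proposal is correct and matches the paper's intent: the corollary is stated without proof precisely because it is the direct combination of \eqref{eq:german_moshchevitin_rewritten}, \eqref{eq:marnat_moshchevitin}, Proposition~\ref{prop:B_is_not_less_than_A}, and Proposition~\ref{prop:the_alternative}, exactly as you describe.
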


\begin{remark}
  In case $n=2$ all the four quantities in Proposition \ref{prop:the_alternative} coincide giving rise to the relation
  \[
    \hat\omega\hat\lambda=
    \frac{1-\hat\omega^{-1}}{1-\hat\lambda}\,,
  \]
  which is the very Jarn\'{\i}k's identity \eqref{eq:jarnik_identity}. Thus, Proposition \ref{prop:the_alternative} provides a kind of generalisation of this identity to arbitrary dimension in the following form:
  \[
    (\hat\omega\hat\lambda)^{\frac1{n-1}}=
    \frac{1-\hat\omega^{-1}}{1-\hat\lambda}\,.
  \]
  However, this identity holds if and only if $\Glin(\hat\omega)=\Gsim(\hat\lambda)$, which happens quite rarely. Nevertheless, if a strict inequality holds, then the sign of this inequality determines uniquely which of the quantities $\Glin(\hat\omega)$, $\Gsim(\hat\lambda)$ is larger.
\end{remark}

\subsection{Inequalities by Bugeaud and Laurent}

Inequalities \eqref{eq:bugeaud_laurent} obviously follow from \eqref{eq:german_moshchevitin} and \eqref{eq:german_uniform_transference}:
\begin{equation}\label{eq:loranoyadenie_plus}
  \frac{1+\omega}{1+\lambda}\geq\hat\omega\geq\frac{n-1}{1-\hat\lambda}\,,\qquad
  \frac{1+\omega^{-1}}{1+\lambda^{-1}}\geq\hat\lambda\geq \frac{1-\hat\omega^{-1}}{n-1}\,.
\end{equation}
Bugeaud and Laurent proved \eqref{eq:bugeaud_laurent} with the help of so called \emph{intermediate Diophantine exponents} by splitting Khintchine's inequalities \eqref{eq:khintchine} into ``going up'' and ``going down'' chains of inequalities between consecutive intermediate exponents and improving on the first inequality in each chain with the respective uniform exponent. An analogous splitting of German's inequalities \eqref{eq:german_uniform_transference} can be found in \cite{german_AA_2012}. It would be interesting to establish a suitable splitting of our inequalities \eqref{eq:german_moshchevitin}. Such a splitting, combined with the existing splitting of \eqref{eq:german_uniform_transference}, would provide a splitting of \eqref{eq:bugeaud_laurent}, alternative to the one of Bugeaud and Laurent.

As for \eqref{eq:loranoyadenie_plus} itself, the way it splits \eqref{eq:bugeaud_laurent} with the help of the respective fourth exponent gives additional information, for instance, in the case when one of relations \eqref{eq:bugeaud_laurent} is an equality. Then the respective pair of inequalities \eqref{eq:loranoyadenie_plus} becomes a pair of equalities and we get a whole range of equalities for the intermediate exponents mentioned above. It is worth mentioning in this context an explicit description of triples of exponents, for which either the first pair of inequalities \eqref{eq:loranoyadenie_plus}, or the second one, is a pair of equalities, obtained recently by Schleischitz \cite{schleischitz_laureaud_nonsharpness_2021}.

\subsection{Inequalities by Schleischitz}

Analysing the quantity $(\hat\omega\hat\lambda)^{\frac1{n-1}}$ appearing in Proposition \ref{prop:the_alternative} provides the following observation.


\begin{proposition}\label{prop:schleischitz_dual_uniform}
  Suppose $1,\theta_1,\ldots,\theta_n$ are linearly independent over $\Q$. Then
  \begin{equation}\label{eq:schleischitz_dual}
    \hat\lambda\leq
    \frac{\omega^{n-1}}{\hat\omega^n}\,,\qquad
    \lambda\leq
    \frac{\omega^n}{\hat\omega^{n+1}}\,.
  \end{equation}
\end{proposition}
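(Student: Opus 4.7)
The plan is to rewrite both inequalities as lower bounds on powers of $\omega/\hat\omega$:
\[
  \hat\lambda\leq\omega^{n-1}/\hat\omega^n\iff(\omega/\hat\omega)^{n-1}\geq\hat\omega\hat\lambda,\qquad
  \lambda\leq\omega^n/\hat\omega^{n+1}\iff(\omega/\hat\omega)^n\geq\hat\omega\lambda,
\]
and to prove each by combining Proposition \ref{prop:the_alternative}, Corollary \ref{cor:german_marnat_moshchevitin}, and Schmidt--Summerer's inequality \eqref{eq:german_moshchevitin}.

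For the first inequality I would establish $\omega/\hat\omega\geq(\hat\omega\hat\lambda)^{1/(n-1)}$ and then raise to the $(n-1)$-th power. The key observation is that Proposition \ref{prop:the_alternative} places $(\hat\omega\hat\lambda)^{1/(n-1)}$ below $\max(A(\hat\omega,\hat\lambda),\Glin(\hat\omega))$ in both branches of the alternative: \eqref{eq:the_alternative_1} gives $(\hat\omega\hat\lambda)^{1/(n-1)}\leq A(\hat\omega,\hat\lambda)$, while \eqref{eq:the_alternative_2} gives $(\hat\omega\hat\lambda)^{1/(n-1)}\leq\Glin(\hat\omega)$. Combined with the chain $\omega/\hat\omega\geq\max(B(\omega,\lambda),\Glin(\hat\omega))\geq\max(A(\hat\omega,\hat\lambda),\Glin(\hat\omega))$ from Corollary \ref{cor:german_marnat_moshchevitin} (using $B\geq A$), this yields the desired lower bound and hence the first inequality.

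For the second inequality I would pass from $\omega/\hat\omega\geq\Glin(\hat\omega)$ to the polynomial level. Since $\Glin(\hat\omega)$ is the largest root of $f(x)=\hat\omega^{-1}x^n-x+(1-\hat\omega^{-1})$ and $f(x)\to+\infty$ as $x\to+\infty$, we have $f(\omega/\hat\omega)\geq0$, which after multiplying by $\hat\omega$ rearranges to $(\omega/\hat\omega)^n\geq\omega-\hat\omega+1$. On the other hand, Schmidt--Summerer's inequality \eqref{eq:german_moshchevitin} gives $\hat\omega(1+\lambda)\leq1+\omega$, i.e.\ $\hat\omega\lambda\leq\omega-\hat\omega+1$. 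Chaining the two estimates yields $(\omega/\hat\omega)^n\geq\omega-\hat\omega+1\geq\hat\omega\lambda$, which is the second inequality.

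The main obstacle I anticipate is recognising that the sandwich argument used for the first inequality does not itself furnish the second: the estimate $\omega/\hat\omega\geq(\hat\omega\hat\lambda)^{1/(n-1)}$ falls just short of giving $(\omega/\hat\omega)^n\geq\hat\omega\lambda$, since to close the gap one would need $\lambda\leq\hat\lambda\cdot\omega/\hat\omega$, which fails in general. Accordingly, for the second inequality one has to exploit the polynomial form of \eqref{eq:marnat_moshchevitin} and extract the sharper estimate $(\omega/\hat\omega)^n\geq\omega-\hat\omega+1$ before appealing to \eqref{eq:german_moshchevitin}.
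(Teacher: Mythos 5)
Your proof is correct and follows essentially the same route as the paper. For the first inequality you reproduce the paper's chain $\omega/\hat\omega\geq\max(B,\Glin(\hat\omega))\geq\max(A,\Glin(\hat\omega))\geq(\hat\omega\hat\lambda)^{1/(n-1)}$, deducing the last step from Proposition~\ref{prop:the_alternative} exactly as the paper does; for the second, your rearrangement $(\omega/\hat\omega)^n\geq\omega-\hat\omega+1\geq\hat\omega\lambda$ is algebraically the same as the paper's identity $\lambda\leq(\omega-\hat\omega+1)/\hat\omega=\omega^n/\hat\omega^{n+1}-f(\omega/\hat\omega)$ together with $f(\omega/\hat\omega)\geq0$, and your closing remark correctly identifies why the sandwich bound alone cannot yield the second inequality.
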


\begin{proof}
  Applying \eqref{eq:german_marnat_moshchevitin} and \eqref{eq:B_is_not_less_than_A} in both cases \eqref{eq:the_alternative_1} and \eqref{eq:the_alternative_2}, we get
  \[
    \frac{\omega}{\hat\omega}\geq
    \max\bigg(\dfrac{1+\lambda}{1+\omega^{-1}}\,,\,\Glin(\hat\omega)\bigg)\geq
    \max\bigg(\dfrac{1-\hat\omega^{-1}}{1-\hat\lambda}\,,\,\Glin(\hat\omega)\bigg)\geq
    (\hat\omega\hat\lambda)^{\frac1{n-1}}.
  \]
  Hence the first inequality \eqref{eq:schleischitz_dual} follows immediately. 
  
  Furthermore\footnote{This part of the proof was proposed by Johannes Schleischitz}, by the first inequality \eqref{eq:german_moshchevitin} we have
  \[
    \lambda\leq
    \frac{\omega-\hat\omega+1}{\hat\omega}=
    \frac{\omega^n}{\hat\omega^{n+1}}-f\big(\omega/\hat\omega\big),
  \]
  where $f(x)$ is as in \eqref{eq:marnat_moshchevitin_f_g}. Since $f\big(\omega/\hat\omega\big)\geq0$ by \eqref{eq:marnat_moshchevitin}, we get the second inequality \eqref{eq:schleischitz_dual}.
\end{proof}

In \cite{schleischitz_hungarica_2021} Schleischitz proves two inequalities that very much resemble \eqref{eq:schleischitz_dual}. They have the form
\begin{equation}\label{eq:schleischitz_rough}
  \hat\omega\leq
  \frac{\lambda^{n-1}}{\hat\lambda^n}+\psi(\lambda,\hat\lambda),\qquad
  \omega\leq
  \frac{\lambda^n}{\hat\lambda^{n+1}}+\chi(\lambda,\hat\lambda)
\end{equation}
with some nonnegative $\psi(\lambda,\hat\lambda)$ and $\chi(\lambda,\hat\lambda)$ turning into zero only if $\lambda/\hat\lambda=\Gsim(\hat\lambda)$. One might wonder whether \eqref{eq:schleischitz_rough} can be relieved from $\psi(\lambda,\hat\lambda)$ or $\chi(\lambda,\hat\lambda)$, so that it would become similar to \eqref{eq:schleischitz_dual}. However, the answer is negative, for, as it follows from results of Kleinbock, Moshchevitin, and Weiss \cite{kleinbock_moshchevitin_weiss} there exist many $\pmb\theta$ with small $\lambda$ and infinite $\hat\omega$. Their argument can be modified to show that, preserving $\lambda$ small, we can make $\hat\omega$ finite, but however large. Thus, it is impossible to substitute the extra summands with zero in the general case.

\section{Nesterenko's linear independence criterion}\label{sec:nesterenko}

In 1985 Nesterenko published his famous linear independence criterion. His original proof was rather involved. A simpler argument can be found in \cite{fischler_zudilin} and \cite{chantanasiri}, see also \cite{fischler_rivoal_2010}. It appears that our Lemma \ref{l:empty_cylinder} provides an even simpler proof.

As before, let us fix $\pmb\theta=(\theta_1,\ldots,\theta_n)\in\R^n$. Let $\ell$, $\ell^\perp$, $r(\,\cdot\,)$, $h(\,\cdot\,)$ be defined as in the beginning of Section \ref{sec:empty_cylinder}.

For each subspace $\cL$ of $\R^{n+1}$ let us denote by $\varphi(\cL)$ the tangent of the angle between $\ell$ and $\cL$. If $\cL$ is defined over $\Q$, we denote by $H(\cL)$ its height, i.e. the covolume of the lattice $\cL\cap\Z^{n+1}$.

\begin{theorem}[Nesterenko, 1985]\label{t:nesterenko}
  Let $\alpha,\beta,c_1,c_2,\e$ be positive real numbers, $\beta\geq\alpha$.
  Let $(t_k)_{k\in\N}$ be an increasing sequence of positive real numbers such that
  \begin{equation}\label{eq:nesterenko_t_k}
    \lim_{k\to\infty}t_k=\infty,\qquad
    \limsup_{k\to\infty}\frac{\log(t_{k+1})}{\log(t_k)}=1.
  \end{equation}
  Suppose that for every integer $k$ large enough there exists $\vec x\in\Z^{n+1}$ such that
  \begin{equation}\label{eq:nesterenko_cylinder}
    r(\vec x)\leq t_k,\qquad
    c_1t_k^{-\beta}\leq h(\vec x)\leq c_2t_k^{-\alpha}.
  \end{equation}
  Then, for every $d$-dimensional subspace $\cL$ of $\R^{n+1}$ defined over $\Q$, there is a positive $c_3=c_3(d,\e)$ such that
  \begin{equation}\label{eq:nesterenko_angle}
    \varphi(\cL)\geq c_3H(\cL)^{-\delta-\e},\qquad
    \delta=\delta(d)=\frac{1+\beta}{1+\beta-d(1+\beta-\alpha)}\,,
  \end{equation}
  provided $d<(1+\beta)/(1+\beta-\alpha)$.
\end{theorem}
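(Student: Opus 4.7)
I would argue by contradiction: suppose there is a $d$-dimensional rational subspace $\cL$ with $\varphi(\cL) < c_3 H(\cL)^{-\delta - \e}$ for a suitably small $c_3=c_3(d,\e)$, and derive a contradiction by exhibiting an integer point inside an empty cylinder produced by Lemma \ref{l:empty_cylinder}. Write $\varphi=\varphi(\cL)$, $H=H(\cL)$, and $\Lambda=\cL\cap\Z^{n+1}$, a lattice of rank $d$ and covolume $H$ in $\cL$.

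The first step is to apply Minkowski's convex body theorem to $\Lambda$ in order to produce a short vector $\vec w\in\Lambda$. Working in an orthonormal basis of $\cL$ whose first vector $\vec u_1$ realises the angle between $\ell$ and $\cL$ (so that the remaining vectors automatically lie in $\ell^\perp$), the box $\{\vec x\in\cL:r(\vec x)\leq\rho,\ h(\vec x)\leq\sigma\}$ has $d$-dimensional volume of order $\rho^{d-1}\min(\sigma,\rho/\varphi)$. Taking $\sigma=\rho/\varphi$ and $\rho\asymp(H\varphi)^{1/d}$ yields a nonzero $\vec w\in\Lambda$ with $r(\vec w)\lesssim(H\varphi)^{1/d}$, $h(\vec w)\lesssim H^{1/d}\varphi^{(1-d)/d}$, and in particular $r(\vec w)/h(\vec w)\asymp\varphi$.

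Second, I would apply Lemma \ref{l:empty_cylinder} with $\vec v=\vec w$. Solving $h(\vec w)=T^{\alpha_L}$ and $r(\vec w)=T^{\alpha_L-1-\beta_L}$ gives $T^{-\alpha_L}=1/h(\vec w)$ and $T^{-\beta_L}=T\,r(\vec w)/h(\vec w)$, so the empty cylinder is
\[
\cC_{\vec w}(T) = \Big\{\vec x : r(\vec x)<T,\ \tfrac{T\,r(\vec w)}{h(\vec w)}\leq h(\vec x)\leq \tfrac{1}{h(\vec w)}-\tfrac{T\,r(\vec w)}{h(\vec w)}\Big\},
\]
valid whenever $T\,r(\vec w)<1/2$. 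To force $\vec x_k\in\cC_{\vec w}(T)$ for some $T$, the entire range $[c_1 t_k^{-\beta},\,c_2 t_k^{-\alpha}]$ of possible values of $h(\vec x_k)$ must lie inside the $h$-band of $\cC_{\vec w}(T)$; together with $r(\vec x_k)\leq t_k<T$ and the lemma's validity, this boils down to three constraints: (i)~$T>t_k$; (ii)~$T\leq c_1 h(\vec w)/(r(\vec w)\,t_k^\beta)\asymp c_1/(\varphi\,t_k^\beta)$; and (iii)~$T<1/(2r(\vec w))$, incorporating a mild auxiliary bound $c_2 h(\vec w)\,t_k^{-\alpha}<1/2$.

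Inserting the Minkowski bounds and $\varphi<H^{-\delta}$, the three constraints reduce to the requirement that $t_k$ lie in the window
\[
H^{(1+\delta(d-1))/(\alpha d)} \lesssim t_k \lesssim \min\!\big(H^{\delta/(1+\beta)},\ H^{(\delta-1)/d}\big),
\]
and a short calculation shows that this window is asymptotically non-empty precisely when $\delta\geq(1+\beta)/(1+\beta-d(1+\beta-\alpha))$; the hypothesis $d<(1+\beta)/(1+\beta-\alpha)$ is exactly what keeps the denominator of $\delta$ positive. The gap condition $\limsup\log t_{k+1}/\log t_k = 1$ then supplies such a $t_k$ for arbitrarily large $H$, absorbing the multiplicative slack into the $\e$-loss, yielding $\vec x_k\in\cC_{\vec w}(T)$ and contradicting Lemma \ref{l:empty_cylinder}. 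The main technicality I expect to check is that the auxiliary bound $c_2 h(\vec w)\,t_k^{-\alpha}<1/2$ is consistent across the window, which follows from $h(\vec w)\lesssim H^{(1+\delta(d-1))/d}\leq t_k^\alpha/(2c_2)$ at the lower endpoint; apart from that bookkeeping and a careful choice of the implied constants, the argument is mechanical once the geometry of Steps 1 and 2 is set up.
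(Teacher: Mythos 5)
The route you take is genuinely different from the paper's. The paper first proves the $d=1$ case directly from Lemma~\ref{l:empty_cylinder}, and then handles $d\geq2$ by applying Minkowski inside $\cL$ to a box adapted to $\ell'$ (the line in $\cL$ nearest to $\ell$), i.e.\ a box with $h'(\vec x)\leq t_0$ and $r'(\vec x)\lesssim(H/t_0)^{1/(d-1)}$; the $d=1$ estimate $r(\vec v)\geq h(\vec v)^{1-\delta'}$ is then combined with the triangle-inequality decomposition $\varphi(\cL)\geq r(\vec v)/h(\vec v)-r'(\vec v)/h(\vec v)$. You instead apply Minkowski directly in the global $(r,h)$ coordinates — a different box and hence a different Minkowski vector (for $d=2$, $\delta'=3/2$, the paper's $\vec v$ has $h'(\vec v)\lesssim H^2$ whereas your $\vec w$ has $h(\vec w)\lesssim H^{5/4}$) — and then feed that single $\vec w$ straight into Lemma~\ref{l:empty_cylinder}, trying to trap one of the hypothesis points $\vec x_k$ inside the resulting empty cylinder. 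Your exponent bookkeeping is correct: the window nonemptiness condition does reduce to $\delta'\geq\delta$ exactly as you claim, and all three constraints are monotone in $h(\vec w)$ and $r(\vec w)$ with the extremal case being $h(\vec w)=\sigma$, $r(\vec w)=\rho$, so plugging in the Minkowski upper bounds is the right worst-case computation.

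There are, however, two places where the write-up overclaims and would need repair. First, the assertion that $r(\vec w)/h(\vec w)\asymp\varphi$ does not follow from Minkowski: you only get upper bounds $r(\vec w)\leq\rho$, $h(\vec w)\leq\sigma$, together with the automatic one-sided inequality $r(\vec w)/h(\vec w)\geq\varphi(\cL)$ (any line through $\cL$ makes at least the angle $\varphi(\cL)$ with $\ell$), which goes in the opposite direction; nothing prevents $h(\vec w)$ from being much smaller than $\sigma$. This happens to be harmless for the nonemptiness of the window (since the comparison $L<U$ only gets easier when $h$ or $r$ shrinks), but the claim as stated is wrong and masks the need for the monotonicity check. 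Second, you need to ensure the window actually meets the tail $\{t_k:k\geq k_0\}$ of the sequence, not merely that it is nonempty. For that you need the upper endpoint to tend to infinity with $H$, which requires a \emph{lower} bound on $h(\vec w)$: it follows from $|\vec w|\geq1$ and $r(\vec w)\leq\rho\to0$ that $h(\vec w)\geq1/2$ for $H$ large, which then gives $U\gtrsim\rho^{-1/(1+\beta)}\to\infty$. This is the analogue, in your framework, of the step in the paper's Step~3 where it is argued that $h'(\vec v)\to\infty$ as $H\to\infty$; you get away with a constant lower bound because your vector lives in the global rather than the relative coordinates, but the point still has to be made. With those two repairs your plan closes, and it does buy a somewhat more direct argument: no separate $d=1$ case, no $\ell'$-relative coordinates, and no triangle inequality decomposition for $\varphi(\cL)$.
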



\subsection{Simple proof of Nesterenko's theorem}\label{sec:simple_proof}

Let us derive Theorem \ref{t:nesterenko} from Lemma \ref{l:empty_cylinder}. We split our argument into three steps. Suppose that the hypothesis of Theorem \ref{t:nesterenko} holds.

\paragraph{Step 1: Adjusting the hypothesis.}

Let $\e'$ be a positive real number sufficiently smaller than $\e$.
Set $\alpha'=(1-\e')\alpha$, $\beta'=(1+\e')\beta$.
Let us show that for every real $t$ large enough there exists $\vec x\in\Z^{n+1}$ such that
\begin{equation}\label{eq:nesterenko_cylinder_germanised}
  r(\vec x)<t,\qquad
  t^{-\beta'}<h(\vec x)<t^{-\alpha'}-t^{-\beta'}.
\end{equation}
For every large $t$ choose $k$ so that $t_k<t\leq t_{k+1}$. It follows from \eqref{eq:nesterenko_t_k} that $t_{k+1}<t_k^{1+\e'/2}$ if $k$ is large enough. Hence for $\vec x\in\Z^{n+1}$ satisfying \eqref{eq:nesterenko_cylinder} we have $r(\vec x)\leq t_k<t$ and
\[
  t^{-\beta'}<
  c_1t_k^{-\beta}\leq
  h(\vec x)\leq
  c_2t_k^{-\alpha}<
  c_2t^{-\alpha(1-\e'/2)}<
  t^{-\alpha'}-t^{-\beta'},
\]
if $k$ and $t$ are large enough. Thus, \eqref{eq:nesterenko_cylinder_germanised} is valid.

\paragraph{Step 2: Proof for $d=1$.}

Let $\cL$ be a one-dimensional subspace generated by a primitive $\vec v\in\Z^{n+1}$ (primitive means that the coordinates of $\vec v$ are coprime). Then $\varphi(\cL)=r(\vec v)/h(\vec v)$ and $H(\cL)=\sqrt{r(\vec v)^2+h(\vec v)^2}$. Let us suppose that $h(\vec v)$ is large and that
\[
  r(\vec v)<h(\vec v)^{1-\delta'},\qquad
  \delta'=\frac{1+\beta'}{\alpha'}\,.
\]
Set $t=h(\vec v)^{1/\alpha'}$. Then $h(\vec v)=t^{\alpha'}$ and $r(\vec v)=t^{\alpha'-1-\beta''}$ with some $\beta''>\beta'$. By Lemma \ref{l:empty_cylinder} there are no integer points $\vec x$ such that
\[
  r(\vec x)<t,\qquad
  t^{-\beta''}\leq h(\vec x)\leq t^{-\alpha'}-t^{-\beta''},
\]
which contradicts the existence of integer $\vec x$ satisfying \eqref{eq:nesterenko_cylinder_germanised} if $t$ is large enough. Thus, if $h(\vec v)$ is large enough, we have
\begin{equation}\label{eq:nesterenko_angle_dimension_1}
  r(\vec v)\geq h(\vec v)^{1-\delta'},
\end{equation}
which implies \eqref{eq:nesterenko_angle} for $d=1$, as $\e'$ can be arbitrarily small.

\paragraph{Step 3: Proof for $d\geq2$.}

Suppose $2\leq d<(1+\beta')/(1+\beta'-\alpha')$. Then, particularly,
\begin{equation}\label{eq:bounds_for_alpha_beta_delta}
  \beta'>\alpha'>1,\qquad
  1<\delta'<\frac{d}{d-1}\,.
\end{equation}
Given a $d$-dimensional subspace $\cL$ defined over $\Q$, let $\ell'$ be the one-dimensional subspace generated by the point in $\cL$ closest to the point $(1,\theta_1,\ldots,\theta_n)$. Then the angle between $\ell$ and $\cL$ equals the angle between $\ell$ and $\ell'$. Furthermore, for each $\vec x\in\cL$ let us denote by $r'(\vec x)$ and $h'(\vec x)$ the Euclidean distances from $\vec x$ to $\ell'$ and to $\cL\cap\ell^\perp$ respectively.

By Minkowski's convex body theorem, for each positive $t$, there is a nonzero integer point $\vec x$ in $\cL$ such that
\begin{equation}\label{eq:bounds_for_v_before_v}
  h'(\vec x)\leq t,\qquad
  r'(\vec x)\leq c_d\big(H(\cL)\big/t\big)^{1/(d-1)}
\end{equation}
with some positive $c_d$ depending only on $d$. Set
\begin{equation}\label{eq:t_0}
  t_0=\big((4c_d)^{d-1}H(\cL)\big)^{1/(d-(d-1)\delta')}.
\end{equation}
Choose a nonzero integer point $\vec v$ in $\cL$ (see Figure \ref{fig:step_3}), so that, in accordance with \eqref{eq:bounds_for_v_before_v},
\begin{equation}\label{eq:bounds_for_v}
  h'(\vec v)\leq t_0,\qquad
  r'(\vec v)\leq c_d\big(H(\cL)\big/t_0\big)^{1/(d-1)}.
\end{equation}
It follows from \eqref{eq:bounds_for_alpha_beta_delta} that $t_0/H(\cL)\to\infty$ and $r'(\vec v)\to0$ as $H(\cL)\to\infty$. We may also assume that $\varphi(\cL)\to0$ as $H(\cL)\to\infty$, neglecting most of subspaces. Then $h'(\vec v)\to\infty$ as $H(\cL)\to\infty$ and
\begin{equation}\label{eq:h_vs_h'}
  h(\vec v)\leq
  h'(\vec v)<
  2h(\vec v).
\end{equation}
Having thus chosen $\vec v$ and assuming $H(\cL)$ to be large enough, we apply \eqref{eq:nesterenko_angle_dimension_1}, \eqref{eq:bounds_for_alpha_beta_delta}, \eqref{eq:t_0}, \eqref{eq:bounds_for_v}, \eqref{eq:h_vs_h'} and estimate $\varphi(\cL)$ as follows:
\begin{multline*}
  \varphi(\cL)\geq
  \frac{r(\vec v)}{h(\vec v)}-\frac{r'(\vec v)}{h(\vec v)}>
  h(\vec v)^{-\delta'}-2\frac{r'(\vec v)}{h'(\vec v)}\geq
  h'(\vec v)^{-\delta'}-\frac{2c_d(H(\cL))^{1/(d-1)}}{t_0^{1/(d-1)}h'(\vec v)}\geq \\
  \geq
  h'(\vec v)^{-\delta'}\bigg(1-\frac{2c_d(H(\cL))^{1/(d-1)}}{t_0^{1/(d-1)}h'(\vec v)^{1-\delta'}}\bigg)\geq \\
  \geq
  t_0^{-\delta'}\bigg(1-\frac{2c_d(H(\cL))^{1/(d-1)}}{t_0^{-\delta'+d/(d-1)}}\bigg)=
  \frac{t_0^{-\delta'}}2=
  \frac12\big((4c_d)^{d-1}H(\cL)\big)^{-\delta'/(d-(d-1)\delta')}\,.
\end{multline*}
Taking into account that $\delta'/(d-(d-1)\delta')=(1+\beta')/(1+\beta'-d(1+\beta'-\alpha'))$, we get \eqref{eq:nesterenko_angle}, as $\e'$ can be arbitrarily small.

\begin{figure}[h]
  \centering
  \begin{tikzpicture}

  \begin{scope}[scale=1,x=0.8cm,y=1cm,z=-0.4cm]

  \coordinate (v) at (-2,4,2);
  \coordinate (v') at (-2,4,0);
  \coordinate (v'') at (0,4,0);
  \coordinate (v''') at (0,0,2);
  \coordinate (w) at (0,0,-5);

  \draw (w) arc[x radius=7.5cm, y radius=2.07cm, start angle=74.5, end angle=-240];
  \draw[rotate=20] (w) arc[x radius=2.6cm, y radius=8cm, start angle=9, end angle=195];
  \draw[color=white, fill=white] (0,5.65,0) circle [radius=2mm];
  \draw (0,0,-5) -- (0,0,5);

  \draw (0,0,0) -- (0,8,0) node[right] {$\ell$};
  \draw (0,0,0) -- (-4,8,0) node[right] {$\ell'$};
  \draw[color=white, fill=white] ($0.882*(v')$) circle [radius=1mm];

  \draw[dashed] (v) -- (v') -- (v'') -- cycle;
  \draw[dashed] (v) -- (v''');

  \draw (0,0,0) -- (v) node[below left] {$\vec v$};
  \node[fill=black,circle,inner sep=1.4pt] at (v) {};

  \node[above left] at ($0.48*(v)+0.48*(v')$) {$r'(\vec v)$};
  \node[below right] at ($0.523*(v)+0.523*(v'')$) {$r(\vec v)$};
  \node[below left] at ($0.45*(v)+0.45*(v''')$) {$h'(\vec v)$};
  \node[right] at ($0.5*(v'')$) {$h(\vec v)$};
  \node[right] at (4.5,0,-4.5) {$\ell^\perp$};
  \node[left] at (-4.7,5,0) {$\cL$};

  \draw ($(v)-(v')-(0,0,0.3)$) -- ($(v)-(v')-(0,0,0.3)+(-0.1,0.2,0)$) -- ($(v)-(v')+(-0.1,0.2,0)$);
  \draw ($(v')+(0,0,0.3)$) -- ($(v')+(0,0,0.3)+(-0.1,0.2,0)$) -- ($(v')+(-0.1,0.2,0)$);
  \draw ($(v'')+(-0.1,0,0.17)$) -- ($(v'')+(-0.1,0,0.17)+(0,0.2,0)$) -- ($(v'')+(0,0.2,0)$);

  \end{scope}

  \end{tikzpicture}
  \caption{Step 3}\label{fig:step_3}
\end{figure}

\subsection{Concerning the linear independence criterion itself}

Theorem \ref{t:nesterenko} implies the estimate
\begin{equation}\label{eq:nesterenko_estimate}
  \dim_{\Q}(\Q+\Q\theta_1+\ldots+\Q\theta_n)\geq
  \frac{1+\beta}{1+\beta-\alpha}\,.
\end{equation}
However, this estimate easily follows from \eqref{eq:nesterenko_angle} with $d=1$, i.e. we may actually avoid Step 3, if \eqref{eq:nesterenko_estimate} is our aim. Indeed, if $\ell$ is contained in a $d$-dimensional subspace $\cL$ defined over $\Q$, then Dirichlet's approximation theorem guarantees that there are infinitely many integer points $\vec x$ in $\cL$ such that
\[
  r(\vec x)\leq c(d,H(\cL))h(\vec x)^{-1/(d-1)}
\]
with some positive $c(d,H(\cL))$. But if $d<(1+\beta')/(1+\beta'-\alpha')$, we have $\delta'<d/(d-1)$, where $\alpha',\beta',\delta'$ are as in Section \ref{sec:simple_proof}, and Step 2 tells us that for every integer $\vec x$ with $h(\vec x)$ large enough we have the opposite:
\[
  r(\vec x)\geq
  h(\vec x)^{1-\delta'}>
  c(d,H(\cL))h(\vec x)^{-1/(d-1)}.
\]
Thus, there is no such $\cL$, and we get \eqref{eq:nesterenko_estimate}.

\subsection{A slight refinement in the case of linear independence}

Theorem \ref{t:nesterenko} is mainly applied to prove linear independence of numbers $1,\theta_1,\dots,\theta_n$ over $\Q$. But if it is already known that they are linearly independent, estimate \eqref{eq:nesterenko_angle} of Theorem \ref{t:nesterenko} can be improved for $d=n$. In this case the restriction on $\alpha$ and $\beta$ in Theorem \ref{t:nesterenko} implies that $\alpha>n-1$. For $\alpha>n$ the following statement provides a stronger inequality than \eqref{eq:nesterenko_angle}. It also provides a lower bound for $\omega=\omega(\pmb\theta)$.

\begin{proposition}\label{prop:nesterenko_slightly_improved}
  Within the hypothesis of Theorem \ref{t:nesterenko}, suppose additionally that $1,\theta_1,\dots,\theta_n$ are linearly independent over $\Q$ and that
  \begin{equation}\label{eq:nesterenko_angle_slightly_improved_denominator}
    \frac{(\alpha-1)(1+\beta)}{\alpha(1+\beta-\alpha)}>n-1.
  \end{equation}
  Let $\cL$ be an $n$-dimensional subspace of $\R^{n+1}$ defined over $\Q$. Then
  \begin{equation}\label{eq:nesterenko_angle_slightly_improved_dual}
    \omega\leq
    \frac{(n-1)\alpha(1+\beta-\alpha)}{(\alpha-1)(1+\beta)-(n-1)\alpha(1+\beta-\alpha)}
  \end{equation}
  and
  \begin{equation}\label{eq:nesterenko_angle_slightly_improved}
    \varphi(\cL)\geq cH(\cL)^{-\delta-\e},\qquad
    \delta=\frac{(\alpha-1)(1+\beta)}{(\alpha-1)(1+\beta)-(n-1)\alpha(1+\beta-\alpha)}\,,
  \end{equation}
  with some positive $c$ depending only on $n$ and $\e$.
\end{proposition}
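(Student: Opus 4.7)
I will split the proof into two parts: reducing the $\omega$-bound \eqref{eq:nesterenko_angle_slightly_improved_dual} to the angle bound \eqref{eq:nesterenko_angle_slightly_improved}, and then establishing the angle bound by refining Step~3 of Section~\ref{sec:simple_proof}.

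For the reduction, observe that every primitive integer vector $\vec n\in\Z^{n+1}$ defines a rational hyperplane $\cL=\vec n^\perp$ of dimension $n$ with height $H(\cL)=|\vec n|$, and the tangent of the angle between $\ell$ and $\cL$ equals
\[
  \varphi(\cL)=\frac{h(\vec n)}{r(\vec n)},\qquad
  h(\vec n)=\frac{|L(\vec n)|}{\sqrt{1+|\pmb\theta|^2}},\quad
  r(\vec n)=\sqrt{|\vec n|^2-h(\vec n)^2}\asymp|\vec n|,
\]
the asymptotic holding in the regime $\varphi(\cL)\to 0$, which is the only relevant one. Hence applying \eqref{eq:nesterenko_angle_slightly_improved} to $\cL=\vec n^\perp$ becomes $|L(\vec n)|\geq c'|\vec n|^{1-\delta-\e}$ for every primitive integer $\vec n$ with $|\vec n|$ large, which gives $\omega\leq\delta-1$. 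A direct algebraic manipulation shows that $\delta-1$ equals the right-hand side of \eqref{eq:nesterenko_angle_slightly_improved_dual}, so it suffices to prove \eqref{eq:nesterenko_angle_slightly_improved}.

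To establish the angle bound I would adapt Step~3 of Section~\ref{sec:simple_proof} with $d=n$, exploiting the linear independence of $1,\theta_1,\dots,\theta_n$ over~$\Q$. Minkowski's convex body theorem applied to the full-rank lattice $\cL\cap\Z^{n+1}$ produces a nonzero $\vec v\in\cL\cap\Z^{n+1}$ with $h'(\vec v)\leq t_0$ and $r'(\vec v)\leq c_n(H(\cL)/t_0)^{1/(n-1)}$ for a free parameter $t_0$. Since $\ell\not\subset\cL$ by linear independence, $\varphi(\cL)>0$, and orthogonal decomposition along $\ell'\subset\cL$ and its complement yields the exact identities $h(\vec v)=h'(\vec v)\cos\varphi(\cL)$ and $r(\vec v)^2=(h'(\vec v)\sin\varphi(\cL))^2+r'(\vec v)^2$. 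Applying the dimension-$1$ estimate $r(\vec v)\geq h(\vec v)^{1-\delta'}$ from Step~2 to $\vec v$ (with $\delta'=(1+\beta')/\alpha'$ as in Section~\ref{sec:simple_proof}) and inserting into the second identity produces the dichotomy: either $h'(\vec v)\sin\varphi(\cL)\gtrsim h'(\vec v)^{1-\delta'}$, or $r'(\vec v)\gtrsim h'(\vec v)^{1-\delta'}$. The second alternative, combined with the Minkowski bound on $r'(\vec v)$, forces $t_0\lesssim H(\cL)^{1/(n-(n-1)\delta')}$; the first then gives $\varphi(\cL)\gtrsim h'(\vec v)^{-\delta'}$.

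The refinement over Step~3 comes from the observation that, under the condition \eqref{eq:nesterenko_angle_slightly_improved_denominator}, one can choose $t_0$ more aggressively, using the constraint $h(\vec v)=h'(\vec v)\cos\varphi(\cL)$ and the full $n$-dimensionality of $\cL\cap\Z^{n+1}$ to find $\vec v$ with $h'(\vec v)$ larger than the default Step~3 choice while still satisfying both inequalities in the dichotomy. Rewritten as $(\alpha-1)(1+\beta)>(n-1)\alpha(1+\beta-\alpha)$, the hypothesis \eqref{eq:nesterenko_angle_slightly_improved_denominator} is precisely the threshold at which this refined choice produces $\varphi(\cL)\gtrsim H(\cL)^{-\delta_{\mathrm{new}}-\e}$ with $\delta_{\mathrm{new}}$ as claimed. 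The main obstacle is to carry out this optimization cleanly and verify that \eqref{eq:nesterenko_angle_slightly_improved_denominator} is indeed the exact threshold; in particular the trigonometric factors $\cos\varphi(\cL)$ and $\sin\varphi(\cL)$ must be tracked carefully, and the factor $(\alpha-1)$ appearing in the numerator of $\delta_{\mathrm{new}}$ should quantify the extra room gained from linear independence.
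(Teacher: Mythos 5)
The reduction you describe in the first paragraph is correct and is essentially the same identification the paper uses (that an $n$-dimensional rational $\cL$ corresponds to its primitive integer normal $\vec w$, with $H(\cL)\asymp|\vec w|$ and $\varphi(\cL)=h(\vec w)/r(\vec w)$); the paper runs it in the opposite direction, from the $\omega$-bound to the angle bound, but the two directions are equivalent. The exact identities $h(\vec v)=h'(\vec v)\cos\psi$ and $r(\vec v)^2=(h'(\vec v)\sin\psi)^2+r'(\vec v)^2$ for $\vec v\in\cL$ (where $\psi$ is the angle between $\ell$ and $\ell'$, $\tan\psi=\varphi(\cL)$) are also correct.

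However, the core of your argument --- that the improved exponent comes from a more aggressive choice of $t_0$ in a ``refined Step~3'' --- is a genuine gap, and I don't think it can be repaired along those lines. Step~3 uses exactly two inputs: the Minkowski bound on a lattice point $\vec v\in\cL\cap\Z^{n+1}$ and the dimension-one estimate $r(\vec v)\geq h(\vec v)^{1-\delta'}$ from Step~2. The latter only encodes the information $\lambda\leq\delta'-1=(1+\beta)/\alpha-1$. From these two inputs Step~3 already extracts, tightly, the exponent $\delta=(1+\beta)/\bigl(1+\beta-d(1+\beta-\alpha)\bigr)$ of Theorem~\ref{t:nesterenko}; tracking the trigonometric factors exactly (replacing the paper's linearisation by your identities) changes constants, not the exponent. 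What the proposition actually exploits, and what your sketch never invokes, is the second piece of information contained in hypothesis \eqref{eq:nesterenko_cylinder}, namely $\hat\omega\geq\alpha$. The factor $\alpha-1$ in the numerator of $\delta$ in \eqref{eq:nesterenko_angle_slightly_improved} is precisely $\alpha(1-\hat\omega^{-1})$ in disguise, and the way $\hat\omega$ enters a bound on $\omega$ (equivalently, on $\varphi(\cL)$ for $d=n$) is through a transference inequality between the ``simultaneous'' and ``linear form'' sides --- the paper uses the Bugeaud--Laurent inequality \eqref{eq:bugeaud_laurent}, itself a consequence of the Schmidt--Summerer inequalities \eqref{eq:german_moshchevitin} together with the uniform transference \eqref{eq:german_uniform_transference}. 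None of this is derivable from Lemma~\ref{l:empty_cylinder} and Minkowski inside a single hyperplane $\cL$; in particular \eqref{eq:german_uniform_transference} is an external input that the empty cylinder lemma does not reproduce. In short, the paper's proof is: Steps~1--2 give $\hat\omega\geq\alpha$ and $\lambda\leq(1+\beta)/\alpha-1$, then Bugeaud--Laurent converts these into the bound \eqref{eq:nesterenko_angle_slightly_improved_dual} on $\omega$, and the angle bound follows by duality. Your proposal is missing the transference step entirely, and the ``threshold'' condition \eqref{eq:nesterenko_angle_slightly_improved_denominator} that you hoped would fall out of a $t_0$-optimisation is in fact the positivity condition $(1-\hat\omega^{-1})(1+\lambda^{-1})>n-1$ required for Bugeaud--Laurent to yield a finite upper bound on $\omega$.
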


\begin{proof}
  It follows from \eqref{eq:nesterenko_cylinder_germanised} and \eqref{eq:nesterenko_angle_dimension_1} of Steps 1 and 2 that
  \[
    \hat\omega\geq\alpha,\qquad
    \lambda\leq\frac{1+\beta}{\alpha}-1.
  \]
  By \eqref{eq:bugeaud_laurent} we also have
  \[
    \frac{1+\omega^{-1}}{1+\lambda^{-1}}\geq \frac{1-\hat\omega^{-1}}{n-1}\,.
  \]
  Hence, taking into account that \eqref{eq:nesterenko_angle_slightly_improved_denominator} implies positiveness of both denominators appearing further, we get
  \[
    \omega\leq
    \frac{n-1}{\big(1-\hat\omega^{-1}\big)\big(1+\lambda^{-1}\big)-(n-1)}\leq
    \frac{(n-1)\alpha(1+\beta-\alpha)}{(\alpha-1)(1+\beta)-(n-1)\alpha(1+\beta-\alpha)}\,.
  \]
  This proves \eqref{eq:nesterenko_angle_slightly_improved_dual}. 

  Furthermore, let $\vec w$ be the integer normal of $\cL$, i.e. the unique (up to sign) nonzero primitive integer vector orthogonal to $\cL$. Bounds for $\varphi(\cL)$ and $\omega$ are obviously related, as $H(\cL)=\sqrt{h(\vec w)^2+r(\vec w)^2}$ and the angle between $\cL$ and $\ell$ is equal to the angle between $\vec w$ and $\ell^\perp$. Hence
  \[
    \varphi(\cL)=
    h(\vec w)/r(\vec w)\geq
    cH(\cL)^{-\omega-1-\e}\geq
    cH(\cL)^{-\delta-\e}
  \]
  with some positive $c$ depending only on $n$ and $\e$. Thus, \eqref{eq:nesterenko_angle_slightly_improved} is also proved.
\end{proof}

\paragraph{Acknowledgements.}

The authors are grateful to Johannes Schleischitz for a series of useful comments, especially, for drawing the authors' attention to Schmidt and Summerer's paper \cite{schmidt_summerer_2013} once again.

The first author is a winner of the ``Junior Leader'' contest conducted by Theoretical Physics and Mathematics Advancement Foundation “BASIS” and would like to thank its sponsors and jury.



\begin{thebibliography}{99}

\bibitem
    {nesterenko_criterion}
    \textsc{Yu.\,V.\,Nesterenko}
    \textit{On the linear independence of numbers.}
    Mosc. Univ. Math. Bull., \textbf{40}:1 (1985), 69--74.
\bibitem
    {khintchine_palermo}
    \textsc{A.\,Ya.\,Khintchine}
    \textit{\"Uber eine Klasse linearer Diophantischer Approximationen.}
    Rend. Sirc. Mat. Palermo, \textbf{50} (1926), 170--195.
\bibitem
    {jarnik_tiflis}
    \textsc{V.\,Jarn\'{\i}k}
    \textit{Zum Khintchineschen ``Übertragungssatz''.}
    Trav. Inst. Math. Tbilissi, \textbf{3} (1938), 193--212.
\bibitem
    {bugeaud_laurent_2007}
    \textsc{Y.\,Bugeaud, M.\,Laurent}
    \textit{Exponents of Diophantine approximation.}
    In: Diophantine Geometry Proceedings, Scuola Normale Superiore Pisa, CRM Series \textbf{4} (2007), 101--121.
\bibitem
    {bugeaud_laurent_2010}
    \textsc{Y.\,Bugeaud, M.\,Laurent}
    \textit{On transfer inequalities in Diophantine approximation II.}
    Math. Z., \textbf{265}:2 (2010), 249--262.
\bibitem
    {german_AA_2012}
    \textsc{O.\,N.\,German}
    \textit{Intermediate Diophantine exponents and parametric geometry of numbers},
    Acta Arithmetica, \textbf{154}:1 (2012), 79--101.
\bibitem
    {german_MJCNT_2012}
    \textsc{O.\,N.\,German}
    \textit{On Diophantine exponents and Khintchine's transference principle},
    Moscow Journal of Combinatorics and Number Theory, \textbf{2}:2 (2012), 22--51.
\bibitem
    {schmidt_summerer_2013}
    \textsc{W.\,M.\,Schmidt, L.\,Summerer} 
    \textit{Diophantine approximation and parametric geometry of numbers.} 
    Monatsh. Math., \textbf{169} (2013), 51--104. 
\bibitem
    {german_moshchevitin_2013}
    \textsc{O.\,N.\,German, N.\,G.\,Moshchevitin}
    \textit{A simple proof of Schmidt--Summerer’s inequality},
    Monatsh. Math., \textbf{170} (2013), 361--370.
\bibitem
    {jarnik_szeged_1950}
    \textsc{V.\,Jarn\'{\i}k}
    \textit{Une remarque sur les approximations diophantiennes lin\'{e}aires.}
    Acta Sci. Math., Szeged \textbf{12} B (1950), 82--86.
\bibitem
    {jarnik_czech_1954}
    \textsc{V.\,Jarn\'{\i}k}
    \textit{Contribution \`{a} la th\'{e}orie des approximations diophantiennes lin\'{e}aires et homog\`{e}nes.}
    Czech. Math. J., \textbf{4}:79 (1954), 330--353.
\bibitem
    {marnat_moshchevitin}
    \textsc{A.\,Marnat, N.\,G.\,Moshchevitin}
    \textit{An optimal bound for the ratio between ordinary and uniform exponents of diophantine approximation.}
    Mathematika, \textbf{66}:3 (2020), 818--854.
\bibitem
    {nguyen_poels_roy}
    \textsc{N.\,A.\,V.\,Nguyen, A.\,Po\"{e}ls, D.\,Roy}
    \textit{A transference principle for simultaneous rational approximation.}
    Journal de Th\'{e}orie des Nombres de Bordeaux, \textbf{32}:2 (2020), 387--402.
\bibitem
    {schleischitz_laureaud_nonsharpness_2021}
    \textsc{J.\,Schleischitz}
    \textit{Optimality of two inequalities for exponents of diophantine approximation.}
    preprint available at arXiv:2102.03154
\bibitem
    {schleischitz_hungarica_2021}
    \textsc{J.\,Schleischitz}
    \textit{On geometry of numbers and uniform rational approximation to the veronese curve.}
    Period. Math. Hung., to appear; preprint available at arXiv:1911.03215
\bibitem
    {kleinbock_moshchevitin_weiss}
    \textsc{D.\,Kleinbock, N.\,Moshchevitin, B.\,Weiss}
    \textit{Singular vectors on manifolds and fractals.}
    Israel J. Math, to appear; preprint available at arXiv:1912.13070
\bibitem
    {fischler_zudilin}
    \textsc{S.\,Fischler, W.\,Zudilin}
    \textit{A refinement of Nesterenko's linear independence criterion with applications to zeta values.}
    Math. Ann., \textbf{347}:4 (2010), 739--763.
\bibitem
    {chantanasiri}
    \textsc{A.\,Chantanasiri}
    \textit{On the criteria for linear independence of Nesterenko, Fischler and Zudilin.}
    Chamchuri J. Math., \textbf{2}:1 (2010), 31--46.
\bibitem
    {fischler_rivoal_2010}
    \textsc{S.\,Fischler, T.\,Rivoal}
    \textit{Irrationality exponent and rational approximations with prescribed growth.}
    Proc. Amer. Math. Soc., \textbf{138}:3 (2010), 799--808.

\end{thebibliography}
\end{document}